\newdimen\LineSpace
\tikzset{
    line space/.code={\LineSpace=#1},
    line space=3pt
}
\newtheorem{thm}{Theorem}[section]
\newtheorem{lem}[thm]{Lemma}
\newtheorem{prop}[thm]{Proposition}
\newtheorem{prob}{\sc Problem}
\theoremstyle{definition}
\theoremstyle{remark}
\newtheorem{rem}[thm]{Remark}
\numberwithin{equation}{section}
\DeclareMathOperator{\Lip}{Lip}
\DeclareMathOperator{\conv}{conv}
\DeclareMathOperator{\Mol}{\mathcal{M}}
\DeclareMathOperator{\N}{\mathbb{N}}
\DeclareMathOperator{\image}{Im}
\title{weakly almost square Lipschitz-free spaces}
\author{Jaan Kristjan Kaasik and Triinu Veeorg}
\address{Institute of Mathematics and Statistics, University of Tartu, Narva~mnt 18, 51009, Tartu, Estonia}
\email{jaan.kristjan.kaasik@ut.ee, triinu.veeorg@ut.ee}
\subjclass{Primary 46B04; Secondary 46B20}
\keywords{Lipschitz-free spaces,  length metric space, weakly almost square, diameter 2 property.}
\begin{document}
\begin{abstract}
   We construct a Lipschitz-free space that is locally almost square but not weakly almost square; this is the first example of such a Banach space. We also prove a result, which indicates that geodesic metric spaces are a potential metric characterization for weakly almost square Lipschitz-free spaces. Lastly, we prove that a Lipschitz-free space can not have the symmetric strong diameter $2$ property.

\end{abstract}

\maketitle

\section{Introduction}
Let $M$ be a metric space with metric $d$ and a fixed point 0. We denote by $\Lip_0(M)$ the Banach space of all Lipschitz functions $f\colon M\rightarrow\mathbb{R}$ with $f(0)=0$ equipped with the obvious linear structure and the norm
\[\|f\|:=\sup\Big\{\frac{|f(x)-f(y)|}{d(x,y)}\colon x, y\in M, x\neq y\Big\}.\]
Let $\delta\colon M\rightarrow \Lip_0(M)^*$ be the canonical isometric embedding of $M$ into $\Lip_0(M)^*$, which is given by $x\mapsto \delta_x$ where $\delta_x(f)=f(x)$. 
The norm closed linear span of $\delta(M)$ in $\Lip_0(M)^*$ is called the \emph{Lipschitz-free space over $M$} and is denoted by $\mathcal{F}(M)$ (see \cite{GOD} and \cite{Weaver} for the background). It is well known that
$\mathcal{F}(M)^*= \Lip_0(M).$

In this article, we study the following properties on Lipschitz-free spaces. A Banach space $X$ is said to be 
\begin{enumerate}
    \item \emph{locally almost square} (briefly, LASQ) if, for every $x\in S_X$, there exists a sequence $(y_i)\subseteq B_X$ such that $\|x\pm y_i\|\rightarrow 1$ and $\| y_i\|\rightarrow 1$;
    \item \emph{weakly almost square} (briefly, WASQ) if, for every $x\in S_X$, there exists a sequence $(y_i)\subseteq B_X$ such that $\|x\pm y_i\|\rightarrow 1$, $\| y_i\|\rightarrow 1$, and $y_i\rightarrow0$ weakly;
    \item \emph{almost square} (briefly, ASQ) if, for every finite subset $(x_j)_{j=1}^n\subseteq S_X$, there exists a sequence $(y_i)\subseteq B_X$ such that $\|x_j\pm y_i\|\rightarrow 1$ for every $j\in\{1,\ldots,n\}$ and $\| y_i\|\rightarrow1$.
\end{enumerate}
It is known that ASQ implies WASQ, and WASQ implies LASQ (see \cite{ALL}). Furthermore, WASQ and ASQ are different properties already in Lipschitz-free spaces, for example $\mathcal{F}([0,1])=L_1([0,1])$ is WASQ but not ASQ. Recall that several diameter $2$ properties (for example the Daugavet property, slice-diameter $2$ property, strong diameter $2$ property) coincide in Lipschitz-free spaces (see \cite{AM},\cite{GPR},\cite{IKW}). Recently it was shown, that a Lipschitz-free space is LASQ if and only if it has any of the aforementioned diameter $2$ properties (see {\cite[Theorem~3.1]{HKO}}). It remained an open question whether WASQ coincides with LASQ in Lipschitz free-spaces. Furthermore, it is unknown whether these two properties are different in general. In Section 2, we provide an example of a Lipschitz-free space that is LASQ, but is not WASQ, thus answering  \cite[Question~3.4]{ALL}. This raises a natural question -- how to describe a class of Banach spaces, where LASQ and WASQ coincide.

By {\cite[Theorem~1.5]{AM}} and {\cite[Theorem~3.1]{HKO}} we know, that a Lipschitz-free space $\mathcal{F}(M)$ is LASQ or has specific diameter $2$ properties if and only if the underlying metric space $M$ is a length space. Recall, that a metric space $M$ is called a \emph{length} space if, for every pair of distinct points $u,v\in M$, the distance $d(u,v)$ is equal to the infimum of the length of rectifiable paths joining them. If the latter infimum is always attained, then $M$ is called a \emph{geodesic} space. Since  WASQ differs from LASQ (and from other diameter $2$ properties) in Lipschitz-free spaces, we start looking for a metric characterization of weakly almost square Lipschitz-free spaces. In Section 3, we prove a result which suggests that any Lipschitz-free space over a geodesic metric space might be WASQ. However, this is a partial result and the full characterization remains as an open problem.

Recall that if a Banach space is ASQ, then it also has the symmetric strong diameter $2$ property (in short, SSD$2$P). It has been proven, that a Lipschitz-free space is never ASQ (see {\cite[Theorem~4.1]{HKO}}), thus arising a question whether there exists a Lipschitz-free space with SSD$2$P. In Section 4, we show that a Lipschitz-free space cannot have the SSD$2$P. 

\subsection*{Notations} For a Banach space $X$ we will denote the closed unit ball by $B_X$, the unit sphere by $S_X$ and the dual space by $X^*$. 

An element in $\mathcal{F}(M)$ of the form 
\[m_{uv}:=\frac{\delta_u-\delta_v}{d(u,v)}\]
for $u,v\in M$ with $u\neq v$ is called a \emph{molecule}. It is well known that the convex hull of all molecules of $\mathcal{F}(M)$ is dense in $B_{\mathcal{F}(M)}$.

Recall that a function $\gamma\colon[\alpha,\beta]\rightarrow M$ is called a \emph{path from $p$ to $q$}, when $\gamma$ is continuous and $\gamma(\alpha)=p$ and $\gamma(\beta)=q$. We denote a path from $p$ to $q$ usually by $\gamma_{pq}$. For a rectifiable path $\gamma\colon [\alpha,\beta]\to M$ we denote its length by $L(\gamma)$, i.e.,
\[
L(\gamma) = \sup\bigl\{\sum_{i=0}^{n-1} d(\gamma(t_i), \gamma(t_{i+1})) \colon n\in \mathbb{N},\, \alpha=t_0\leq \dotsb\leq t_n=\beta \bigr\}.
\]
By default we assume that the domain interval for a path is $[0,1]$. 

Given a point $p$ in $M$ and $r>0$, we denote by $B(p,r)$ the open ball in $M$ centred at $p$ of radius $r$.

% In section 2, we show that LASQ and WASQ are in fact different properties. A Lipschitz-free space $\mathcal{F}(M)$ is LASQ if and only if $M$ is length (see {\cite[Theorem~3.1]{HKO}}). We will construct such a metric space $M$ that $M$ is length, but $\mathcal{F}(M)$ does not have WASQ.

% In section 3, we prove that a Lipschitz-free space can not have SSD2P, thus answering a question by ... (reference? your paper?). 

 \section{Lipschitz-free space with LASQ and without WASQ}
 The purpose of this section is to introduce a Lipschitz-free space that is LASQ, but not WASQ. To do this we take the metric space from {\cite[Example~3.1]{Vee}} and add just enough points to make it a length space, but not a geodesic space.

Consider
\[M:= [0,1]\times (0,1/2] \cup\{(0,0),(1,0)\}\]
with the metric
\[d\big((a,b),(c,d)\big):=\begin{cases} 
    |a-c|,&\text{if }b=d,\\ 
    \min\{a+c,2-a-c\}+|b-d|, &\text{if }b\neq d. \end{cases}\]
Let $(0,0)$ be the fixed point 0. Clearly $M$ is length, thus $\mathcal{F}(M)$ is LASQ by {\cite[Theorem~3.1]{HKO}}.
In the following we will show that $\mathcal{F}(M)$ is not WASQ. 

Set $x:=(0,0)$, $y:=(1,0)$ and let $\pi_1,\pi_2\colon M\rightarrow \mathbb{R}$ be projections defined by
\[\pi_1(a,b)=a\quad \text{ and } \quad \pi_2(a,b)=b \quad \text{ for all }(a,b)\in M.\]
Clearly $\pi_1,\pi_2\in S_{\Lip_0(M)}$. For $\delta,\varepsilon>0$, let
\[\Mol_{\delta,\varepsilon}(M)=\big\{m_{uv}\colon u,v\in M, u\neq v, \delta<\pi_2(u)<\varepsilon, \delta<\pi_2(v)<\varepsilon\big\}.\]
For $u,v\in M$ we denote the segment between points $u$ and $v$ by $[u,v]$, i.e
\[[u,v]=\big\{p\in M\colon d(u,p)+d(p,v)=d(u,v)\big\}.\]

\begin{lem}\label{lemma_something}
Let $\varepsilon,\delta>0$, and let $\nu\in \conv \big(\Mol_{\delta,1}(M)\big)$. If there exists $g\in B_{\Lip_0(M)}$ such that $g(\nu)=\|\nu\|$ and $|g(m_{xy})|<\varepsilon$, then there exists $f\in B_{\Lip_0(M)}$ such that $f(\nu)>(\|\nu\|-\varepsilon-2\delta)/(1+\varepsilon+2\delta)$ and
$f(p)=0$ for every $p\in M$ with $\pi_2(p)\le\delta$.
\end{lem}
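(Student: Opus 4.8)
The plan is to build from $g$ an auxiliary function $\tilde f\colon M\to\mathbb{R}$ with $\|\tilde f\|_{\Lip_0(M)}\le 1+\varepsilon+2\delta$, with $\tilde f(p)=0$ whenever $\pi_2(p)\le\delta$, and with $\tilde f(\nu)>\|\nu\|-\varepsilon-2\delta$; the lemma then follows by taking $f:=\tilde f/(1+\varepsilon+2\delta)$. (One may assume $\delta<1/2$, since otherwise $\Mol_{\delta,1}(M)=\emptyset$.) The point is that subtracting from $g$ a Lipschitz extension of $g|_{\{\pi_2\le\delta\}}$ would only give Lipschitz constant $2$; instead one subtracts just the affine-in-$\pi_1$ interpolation of the two boundary values $g(0,\delta)$ and $g(1,\delta)$. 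So set $\tilde f(p):=0$ when $\pi_2(p)\le\delta$ and $\tilde f(p):=g(p)-\bigl[(1-\pi_1(p))g(0,\delta)+\pi_1(p)g(1,\delta)\bigr]$ when $\pi_2(p)>\delta$; the two formulas agree at the frontier points $(0,\delta)$ and $(1,\delta)$, so $\tilde f$ is well defined, and $\tilde f(0)=0$. The arithmetic input is $|g(0,\delta)|\le d((0,\delta),0)=\delta$ (using $g(0)=0$) and $|g(1,\delta)|\le|g(1,0)|+\delta=|g(m_{xy})|+\delta<\varepsilon+\delta$ (using $m_{xy}=\delta_{(0,0)}-\delta_{(1,0)}$ and $g(0,0)=0$), whence $|g(1,\delta)-g(0,\delta)|<\varepsilon+2\delta$.

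To check $\|\tilde f\|_{\Lip_0(M)}\le1+\varepsilon+2\delta$, use that any two points of $M$ are joined by a geodesic assembled from sub-arcs lying inside single horizontal segments and inside the two vertical segments $\{0\}\times[0,\tfrac12]$ and $\{1\}\times[0,\tfrac12]$, so it suffices to bound the Lipschitz constant of $\tilde f$ along each such sub-arc. On a horizontal segment at height $>\delta$, $\tilde f$ is $g$ plus an affine function of $\pi_1$ of slope modulus $|g(1,\delta)-g(0,\delta)|<\varepsilon+2\delta$, hence $(1+\varepsilon+2\delta)$-Lipschitz there; on the horizontal segments at height $\le\delta$ and on the parts of the two vertical segments below height $\delta$, $\tilde f\equiv0$; on the parts of the vertical segments above height $\delta$, $\tilde f$ differs from $g$ by an additive constant, hence is $1$-Lipschitz; and a short computation handles pairs straddling height $\delta$ along a vertical segment. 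Adding these bounds along a geodesic gives the global Lipschitz estimate.

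Finally, $\tilde f$ vanishes on $\{\pi_2\le\delta\}$ by construction. The support of $\nu$ lies in $N:=\{p\in M:\pi_2(p)>\delta\}\cup\{(0,\delta),(1,\delta)\}$, on which $(g-\tilde f)(p)=(1-\pi_1(p))g(0,\delta)+\pi_1(p)g(1,\delta)$; since $|\pi_1(p)-\pi_1(q)|\le d(p,q)$, the restriction $(g-\tilde f)|_N$ is Lipschitz with constant $|g(1,\delta)-g(0,\delta)|<\varepsilon+2\delta$. As $\nu\in\conv(\Mol_{\delta,1}(M))$ is a convex combination of molecules of points of $N$ and $\mathcal{F}(N)$ embeds isometrically in $\mathcal{F}(M)$, we get $|(g-\tilde f)(\nu)|<(\varepsilon+2\delta)\|\nu\|\le\varepsilon+2\delta$, so $\tilde f(\nu)=g(\nu)-(g-\tilde f)(\nu)=\|\nu\|-(g-\tilde f)(\nu)>\|\nu\|-\varepsilon-2\delta$. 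Then $f:=\tilde f/(1+\varepsilon+2\delta)$ lies in $B_{\Lip_0(M)}$, satisfies $f(p)=0$ for all $p$ with $\pi_2(p)\le\delta$, and $f(\nu)>(\|\nu\|-\varepsilon-2\delta)/(1+\varepsilon+2\delta)$.

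The main obstacle is keeping $\|\tilde f\|$ at $1+\varepsilon+2\delta$ rather than $2$: one must not transport $g|_{\{\pi_2\le\delta\}}$ faithfully into $\{\pi_2>\delta\}$, but instead exploit that in this metric every route from $\{\pi_2>\delta\}$ down to $\{\pi_2\le\delta\}$ must pass through the columns $\pi_1\in\{0,1\}$, where $g$ is controlled by $\delta$ on the left (via the base point $(0,0)$) and by $\varepsilon+\delta$ on the right (via $|g(m_{xy})|<\varepsilon$). Verifying the global Lipschitz bound for $\tilde f$ and the small Lipschitz constant of $g-\tilde f$ along $\supp\nu$ are the steps that need care.
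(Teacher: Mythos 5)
Your construction is, up to presentation, identical to the paper's: writing $u=(0,\delta)$, $v=(1,\delta)$ and $h=g+\bigl(g(u)-g(v)\bigr)\pi_1=g+g(m_{uv})\pi_1$, your $\tilde f$ equals $h-h(u)$ on $\{\pi_2>\delta\}$ and $0$ below, and your bound $|g(u)-g(v)|<\varepsilon+2\delta$ is the paper's bound $|g(m_{uv})|<\varepsilon+2\delta$ (obtained a little more directly than via the molecule-distance formula). The estimate of $(g-\tilde f)(\nu)$ and the final normalisation by $1+\varepsilon+2\delta$ also match the paper, so the construction and the quantitative content are right.

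The gap is in your verification that $\|\tilde f\|\le 1+\varepsilon+2\delta$. You assert that any two points of $M$ are joined by a geodesic assembled from horizontal and vertical sub-arcs; this is false. For $b\in(0,1/2)\setminus\{2^{-n}\colon n\in\mathbb N\}$ the points $(0,b)$ and $(1,b)$ have distance $|0-1|=1$ by the first clause of the metric, while any path joining them contains a point with $\pi_1=1/2$, hence a point $(1/2,2^{-n})$, and therefore has length at least $1+2|b-2^{-n}|>1$; the pair $x=(0,0)$, $y=(1,0)$ likewise admits no geodesic. For such pairs your decomposition argument yields no bound at all. The repair is short and is exactly what the paper does: $h$ is globally Lipschitz on all of $M$ with $\|h\|\le\|g\|+|g(u)-g(v)|\,\|\pi_1\|<1+\varepsilon+2\delta$ (no path decomposition needed), which handles every pair inside $\{\pi_2>\delta\}$ at once; for a pair $p,q$ with $\pi_2(p)\le\delta<\pi_2(q)$ one checks from the metric formula that the metric segment $[p,q]$ contains $u$ or $v$, where $\tilde f=0$ and where the identity $\tilde f=h-h(u)=h-h(v)$ still holds, so that for $w\in[p,q]\cap\{u,v\}$ one gets $|\tilde f(p)-\tilde f(q)|=|\tilde f(w)-\tilde f(q)|\le\|h\|\,d(w,q)\le\|h\|\,d(p,q)$. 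With that substitution your proof is complete.
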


\begin{proof}
Let $g\in S_{\Lip_0(M)}$ be such that $g(\nu)=\|\nu\|$ and $|g(m_{xy})|<\varepsilon$. 
Let $u=(0,\delta)$ and $v=(1,\delta)$. By {\cite[Lemma~1.2]{Vee}} we get
\[\|m_{xy}-m_{uv}\|=\frac{d(x,u)+d(v,y)+|d(x,y)-d(u,v)|}{\max\big\{d(x,y),d(u,v)\big\}}=2\delta,\]
and thus
\[|g(m_{uv})|\le|g(m_{xy})|+\|m_{xy}- m_{uv}\|<\varepsilon+2\delta.\]
Let $h=g+g(m_{uv})\pi_1$. Then  $h(u)=h(v)$,
\[\|h\|\le \|g\|+|g(m_{uv})|\|\pi_1\|< 1+\varepsilon+2\delta,\]
and
\[h(\nu)\ge g(\nu)-\|g(m_{uv})\pi_1\|=\|\nu\|-|g(m_{uv})|>\|\nu\|-\varepsilon-2\delta.\]
Let  $f\colon M\rightarrow \mathbb{R}$ be defined by
\[f(p)=\begin{cases} 
    0, & \text{if }\pi_2(p)\le\delta,\\ 
    \big(h(p)-h(u)\big)/\|h\|, &\text{if } \pi_2(p)> \delta. \end{cases}\]
Since $f(u)=f(v)=0$ and $[p,q]\cap\{u,v\}\neq \emptyset$ for all $p,q\in M$ with $\pi_2(p)\le \delta$ and $\pi_2(q)>\delta$, we get  $f\in B_{\Lip_0(M)}$. Clearly  $f(p)=0$ for every $p\in M$ with $\pi_2(p)\le\delta$. Remind that $\nu\in \conv \big(\Mol_{\delta,1}(M)\big)$, thus we have
$\nu=\sum_{i=1}^n\lambda_im_{u_iv_i}$ for some $n\in\mathbb{N}$, $\lambda_i>0$, $u_i,v_i\in M$ with $u_i\neq v_i$ for every $i\in \{1,\ldots,n\}$. Therefore
\begin{align*}
    f(\nu)&=\sum_{i=1}^n\lambda_if(m_{u_iv_i})=\frac{1}{\|h\|}\sum_{i=1}^n\lambda_i\frac{h(u_i)-h(u)-\big(h(v_i)-h(u)\big)}{d(u_i,v_i)}\\
    &=\frac{1}{\|h\|}\sum_{i=1}^n\lambda_ih(m_{u_iv_i})=\frac{h(\nu)}{\|h\|}>\frac{\|\nu\|-\varepsilon-2\delta}{1+\varepsilon+2\delta}.
\end{align*}
\end{proof}

\begin{lem}\label{lemma1}
For every $\varepsilon\in(0,1)$ and for every $\mu\in B_{\mathcal{F}(M)}$, if 
\[\|m_{xy}\pm \mu\|<1+\varepsilon^{2}/64\quad\text{ and }\quad \|\mu\|> 1-\varepsilon^{2}/64,\]
then there exist $\delta>0$ and $\nu\in\conv \big(\Mol_{\delta,\varepsilon}(M)\big)$ such that $\|\mu-\nu\|<\varepsilon$.
\end{lem}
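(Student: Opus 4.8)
The plan is to show that any $\mu$ which is close to a norm-attaining configuration for the functional $m_{xy}$ must, after a small approximation by a convex combination of molecules, have its mass concentrated on molecules $m_{uv}$ whose endpoints lie strictly between the horizontal levels $\delta$ and $\varepsilon$ for suitable $\delta$. First I would fix $\mu\in B_{\mathcal{F}(M)}$ satisfying the two hypotheses and, using density of convex combinations of molecules in $B_{\mathcal{F}(M)}$, replace $\mu$ by some $\mu'=\sum_{i}\lambda_i m_{u_iv_i}$ with $\|\mu-\mu'\|$ as small as I like; the estimates for $\mu$ then transfer to $\mu'$ up to a controlled loss, so it suffices to prove the conclusion for a finite convex combination of molecules. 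The goal then becomes: discard from this convex combination all molecules that are ``bad'', i.e., those with an endpoint at level $\le\delta$ or with an endpoint at level $\ge\varepsilon$, and argue that the discarded mass is small in $\varepsilon$.

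Next I would quantify ``bad'' molecules via test functions. For the molecules having an endpoint at high level ($\pi_2\ge\varepsilon$), I would apply the projection $\pi_2\in S_{\Lip_0(M)}$ (or a truncation thereof): since $\pi_2(m_{xy})=0$, the hypothesis $\|m_{xy}\pm\mu\|<1+\varepsilon^2/64$ together with $\|\mu\|>1-\varepsilon^2/64$ forces $|\pi_2(\mu)|$ to be small, which in turn — because a molecule reaching up to level $\varepsilon$ and back contributes a definite amount to a suitable Lipschitz function detecting height — bounds the total $\lambda$-weight of such molecules by something like $O(\varepsilon)$. Symmetrically, for molecules touching level $\le\delta$, one wants to choose $\delta$ small enough (depending on the finitely many molecules actually appearing, or on a tail argument) that their contribution is also negligible; here the idea is that a molecule $m_{uv}$ with, say, $\pi_2(u)$ very close to $0$ is norm-close to a molecule $m_{u'v}$ with $u'$ pushed up to level $\delta$, at a cost controlled by $\delta/d(u,v)$, and one absorbs the genuinely tiny-scale part into the error. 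The key quantitative engine is the hypothesis combination $\|m_{xy}\pm\mu\|<1+\varepsilon^2/64$ and $\|\mu\|>1-\varepsilon^2/64$: this is exactly the statement that $\mu$ nearly bisects $m_{xy}$, i.e., $m_{xy}\approx \tfrac12((m_{xy}+\mu)+(m_{xy}-\mu))$ with both summands of norm $\approx 1$, which by a standard convexity/``rough extreme point'' argument forces $\mu$ to lie near the ``horizontal'' directions — precisely the molecules captured by $\Mol_{\delta,\varepsilon}(M)$.

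The step I expect to be the main obstacle is making the choice of $\delta$ uniform and legitimate: a priori $\delta$ must be chosen after $\mu$ (equivalently after the finite convex combination $\mu'$), and one has to ensure that pushing low endpoints up to level $\delta$ does not destroy the convex-combination structure (some molecules might have both endpoints at levels between $\delta$ and $\varepsilon$ only after relabeling, and molecules running between two different very low levels need care since the metric $d$ mixes the horizontal coordinate $|a-c|$ with the vertical $|b-d|$). Concretely, I would (i) first split off and bound the high part using $\pi_2$, (ii) then, on the remaining mass, choose $\delta$ small relative to the minimal denominator $d(u_i,v_i)$ over the finitely many surviving molecules and relative to $\varepsilon$, (iii) replace each surviving molecule by a genuinely ``$\delta$-to-$\varepsilon$'' molecule at total cost $<\varepsilon/2$ using the displacement estimate from \cite[Lemma~1.2]{Vee} as in Lemma~\ref{lemma_something}, and (iv) renormalize the resulting convex combination, checking that the renormalization constant is within $\varepsilon$ of $1$ because the discarded weight was shown to be $O(\varepsilon)$. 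Assembling these, with $\varepsilon^2/64$ chosen precisely so that the square-root losses coming from the ``nearly bisects'' estimate come out below the linear threshold $\varepsilon$, yields $\nu\in\conv(\Mol_{\delta,\varepsilon}(M))$ with $\|\mu-\nu\|<\varepsilon$.
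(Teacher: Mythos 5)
Your overall scheme (approximate $\mu$ by a finite convex combination of molecules, discard the ``low'' and ``high'' parts, and keep the middle) matches the paper's, and your treatment of the low part is essentially fine: after removing $x,y$ from the set of endpoints, the finitely many remaining endpoints all have strictly positive height, so $\delta$ can simply be chosen below all of them (no pushing-up or renormalization is really needed). The genuine gap is in your treatment of the high part. You propose to bound the total weight of molecules with endpoints at height $\ge\varepsilon$ by testing against $\pi_2$ (or a height-detecting function), on the grounds that ``a molecule reaching up to level $\varepsilon$ and back contributes a definite amount'' to such a function. This fails: a single molecule $m_{uv}$ with \emph{both} endpoints at high level need not go ``up and back'' at all. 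For instance $u=(0,1/2)$, $v=(1,1/2)$ gives $\pi_2(m_{uv})=(\pi_2(u)-\pi_2(v))/d(u,v)=0$, so $\pi_2$ (and any function depending only on height) is completely blind to exactly the molecules you must exclude. Moreover, the inference that $|\pi_2(\mu)|$ is small ``since $\pi_2(m_{xy})=0$'' is not valid: from $\pi_2(m_{xy})=0$ and $\|m_{xy}\pm\mu\|<1+\varepsilon^2/64$ one only gets $|\pi_2(\mu)|<1+\varepsilon^2/64$. (What the hypotheses do give is that $|g(m_{xy})|$ is small for a functional $g$ norming $\mu$ --- a statement about $m_{xy}$ tested against $\mu$'s norming functional, not about $\mu$ tested against a fixed functional.)

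The missing idea is the paper's construction of a competitor functional for $\|m_{xy}+\mu\|$. One takes the peak function
\[
f_{xy}(p)=\frac{d(x,y)}{2}\cdot\frac{d(y,p)-d(x,p)}{d(x,p)+d(y,p)},
\]
which norms $m_{xy}$ but, crucially, has local Lipschitz constant at most $1/(1+\varepsilon/8)$ between points of height $\ge\varepsilon/16$ (because there $d(x,p)+d(y,p)\ge(1+\varepsilon/8)\,d(x,y)$). This slack allows one to add $\tfrac{\varepsilon}{9}f$, where $f$ comes from Lemma~\ref{lemma_something}: it vanishes at heights $\le\varepsilon/16$ and nearly norms the high part $\nu_1$. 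The sum $h=f_{xy}+\tfrac{\varepsilon}{9}f$ still satisfies $\|h\|\le1$ and $h(m_{xy})=1$, while $h(\mu)>\varepsilon^2/64$ whenever the high part carries weight more than $15\varepsilon/16$; this contradicts $\|m_{xy}+\mu\|<1+\varepsilon^2/64$ and is what actually kills the high-level mass, horizontal molecules included. Without this step (or an equivalent mechanism that detects high-level \emph{horizontal} molecules), your argument cannot close.
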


\begin{proof}
Fix $\varepsilon$ and $\mu\in B_{\mathcal{F}(M)}$ such that 
\[\|m_{xy}\pm \mu\|<1+\varepsilon^{2}/64\quad\text{ and }\quad \|\mu\|> 1-\varepsilon^{2}/64.\]
Let $\nu_0=\sum_{i\in I}\lambda_i m_{u_iv_i}\in B_{\mathcal{F}(M)}$, where $I$ is finite, $\lambda_i>0$ and $\sum_{i\in I}\lambda_i=\|\nu_0\|$, be such that $\|\mu-\nu_0\|<\varepsilon^{2}/64$. We may additionally assume that $x,y\notin\{u_i,v_i\colon i\in I\}$ and either $\pi_2(u_i),\pi_2(v_i)\ge \varepsilon/16$ or $\pi_2(u_i),\pi_2(v_i)<\varepsilon/16$ for every $i\in I$. Let $\delta$ be such that $\pi_2(u_i),\pi_2(v_i)>\delta$ for every $i\in I$ and let
\[J=\{i\in I\colon \pi_2(u_i),\pi_2(v_i)\ge \varepsilon/16\}.\]
If $\sum_{i\in J}\lambda_i\le15\varepsilon/16$, then let $\nu=\sum_{i\in I\setminus J}\lambda_i m_{u_iv_i}$. Then  for every $i\in I\setminus J$ we have $\delta<\pi_2(u_i)<\varepsilon/16$ and $\delta<\pi_2(v_i)<\varepsilon/16$ and thus $\nu\in \conv \big(\Mol_{\delta,\varepsilon}(M)\big)$. Additionally
\[\|\mu-\nu\|\le \|\mu-\nu_0\|+\|\nu_0-\nu\|<\frac{\varepsilon^{2}}{64}+\sum_{i\in J}\lambda_i\le \frac{\varepsilon^{2}}{64}+\frac{15}{16}\varepsilon<\varepsilon.\]

Now assume that $\sum_{i\in J}\lambda_i>15\varepsilon/16$. We will show that this is a contradiction. Let $\nu_1=\sum_{i\in J}\lambda_i m_{u_iv_i}$ and
let $g\in S_{\Lip_0(M)}$ be such that $g(\nu_0)=\|\nu_0\|$. Then $g(\nu_1)=\|\nu_1\|$ and
\[g(\mu)\ge g(\nu_0)-\|\mu-\nu_0\|>\|\nu_0\|- \frac{\varepsilon^{2}}{64}\ge \|\mu\|-\|\mu-\nu_0\|{\color{blue} -}\frac{\varepsilon^{2}}{64}>1-\frac{3\varepsilon^{2}}{64},\]
thus 
\[\pm g(m_{xy})\le \|m_{xy}\pm \mu\|-g(\mu)<1+\frac{\varepsilon^{2}}{64}-1+\frac{3\varepsilon^{2}}{64}=\frac{\varepsilon^{2}}{16}.\]
By Lemma \ref{lemma_something} (taking $\delta=\varepsilon/16$, $\varepsilon=\varepsilon/16$, and $\nu=\nu_1$), there exists $f\in B_{\Lip_0(M)}$ such that 
$f(\nu_1)>(\|\nu_1\|-3\varepsilon/16)/(1+3\varepsilon/16)$
and
$f(p)=0$ for every $p\in M$ with $\pi_2(p)<\varepsilon/16$. Then
\[f(\nu_1)>\frac{\|\nu_1\|-3\varepsilon/16}{1+3\varepsilon/16}=\frac{16\sum_{i\in J}\lambda_i-3\varepsilon}{16+3\varepsilon}>\frac{15\varepsilon-3\varepsilon}{16+3\varepsilon}>\frac{12\varepsilon}{19}>\frac{9\varepsilon}{16}.\]
Let $h=f_{xy}+\varepsilon/9\cdot f$,
where
\[f_{xy}(p)=\frac{d(x,y)}{2}\cdot\frac{d(y,p)-d(x,p)}{d(x,p)+d(y,p)}.\]
By {\cite[Lemma~3.6]{GPR}} we get $\|f_{xy}\|=1$ and for all  $p,q\in M$ with $\pi_2(p)\ge \varepsilon/16$ we have $|f_{xy}(p)-f_{xy}(q)|\le d(p,q)/(1+\varepsilon/8)$, since
\[d(x,p)+d(y,p)=\pi_1(p)+\pi_2(p)+1-\pi_1(p)+\pi_2(p)\ge 1+\varepsilon/8=(1+\varepsilon/8)d(x,y).\]

Thus
\begin{align*}
    |h(p)-h(q)|&\le \big|f_{xy}(p)-f_{xy}(q)\big|+\frac{\varepsilon}{9}\big|f(p)-f(q)\big|\\
    &\le \frac{1}{1+\varepsilon/8}d(p,q)+\frac{\varepsilon}{9}d(p,q)\\
    &= \Big(1-\frac{\varepsilon}{8+\varepsilon}\Big)d(p,q)+\frac{\varepsilon}{9}d(p,q)\\
    &< d(p,q)
\end{align*}
for all $p,q\in M$ with $\pi_2(p)\ge \varepsilon/16$. Furthermore $f(p)=0$ when $\pi_2(p)<\varepsilon/16$.
This gives us $\|h\|\le1$. Also $|f_{xy}(\mu)|<\varepsilon^{2}/64$, since $f_{xy}(m_{xy})=1$ and $\|m_{xy}\pm\mu\|< 1+\varepsilon^{2}/64$. Therefore
\[|f_{xy}(\nu_0)|\le|f_{xy}(\mu)|+\|\nu_0-\mu\|<\varepsilon^{2}/32.\]
Furthermore, $h(m_{xy})=1$ and
\[h(\mu)\ge h(\nu_0)-\frac{\varepsilon^{2}}{64}=f_{xy}(\nu_0)+\frac{\varepsilon}{9}f(\nu_1)-\frac{\varepsilon^{2}}{64}>-\frac{\varepsilon^{2}}{32}+\frac{\varepsilon}{9}\frac{9\varepsilon}{16}-\frac{\varepsilon^{2}}{64}=\frac{\varepsilon^{2}}{64},\]
which is a contradiction with $\|m_{xy}+\mu\|< 1+\varepsilon^{2}/64$.

\end{proof}

\begin{thm}\label{notWASQ}
The space $\mathcal{F}(M)$ is not WASQ.
\end{thm}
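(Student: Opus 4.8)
The plan is to show that for the specific norm-one element $m_{xy} \in S_{\mathcal{F}(M)}$, \emph{no} sequence $(\mu_i) \subseteq B_{\mathcal{F}(M)}$ can simultaneously satisfy $\|m_{xy} \pm \mu_i\| \to 1$, $\|\mu_i\| \to 1$, and $\mu_i \to 0$ weakly. Suppose toward a contradiction that such a sequence exists. Since $\|m_{xy} \pm \mu_i\| \to 1$ and $\|\mu_i\| \to 1$, for all large $i$ the hypotheses of Lemma~\ref{lemma1} are met (with $\varepsilon$ fixed as small as we like), so there exist $\delta_i > 0$ and $\nu_i \in \conv\big(\Mol_{\delta_i, \varepsilon}(M)\big)$ with $\|\mu_i - \nu_i\| < \varepsilon$. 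The point of Lemma~\ref{lemma1} is precisely that the mass of $\mu_i$ must concentrate, up to error $\varepsilon$, on molecules supported in the thin strip $\{p : \pi_2(p) < \varepsilon\}$ near the fixed point $0 = (0,0)$; this is the structural consequence of being close to $m_{xy}$ in the ``$\pm$'' sense while having norm near $1$.

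Next I would exploit the weak convergence $\mu_i \to 0$ to derive a contradiction with this concentration. The natural functional to test against is the projection $\pi_2 \in S_{\Lip_0(M)} = S_{\mathcal{F}(M)^*}$, or rather a suitable truncation/rescaling of it. If $\nu_i = \sum_j \lambda_j^{(i)} m_{u_j v_j}$ with each $u_j, v_j$ having $\pi_2$-coordinate in $(\delta_i, \varepsilon)$, one checks that such molecules pair well with $\pi_2$: specifically, on a molecule $m_{uv}$ with $\pi_2(u), \pi_2(v) \in (0, \varepsilon)$ and $u, v$ in the ``legs'' of $M$ (the vertical segments at $a \in \{0,1\}$) or close to them, the value $\pi_2(m_{uv})$ should be bounded away from $0$ in absolute value for a large-mass portion — because in the metric $d$, vertical displacement $|b - d|$ contributes fully to the distance, so $|\pi_2(u) - \pi_2(v)| / d(u,v) \le 1$ but horizontal-only molecules (where $b = d$) have $\pi_2(m_{uv}) = 0$. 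The key observation is that a molecule $m_{uv}$ with $b = d$ (purely horizontal) inside the strip $\pi_2 < \varepsilon$ has both endpoints at height $1/2^n < \varepsilon$, hence lies on a single horizontal line high up in the space; using the structure of $M$ (the horizontal lines are only connected to each other and to $0$ through the legs at $a \in \{0,1\}$, passing near height $0$), one argues a large fraction of the mass of $\nu_i$ must lie on molecules with genuine vertical content, so $|\pi_2(\nu_i)|$ — or the value against an appropriate fixed functional independent of $i$ — is bounded below by a constant times $\|\nu_i\| \approx 1$.

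Concretely, I would fix a single functional $\varphi \in B_{\Lip_0(M)}$ that vanishes on the strip-free part and detects the vertical mass uniformly in $i$; then $\varphi(\mu_i) \ge \varphi(\nu_i) - \varepsilon \ge c - \varepsilon$ for all large $i$, contradicting $\varphi(\mu_i) \to 0$ once $\varepsilon$ was chosen smaller than $c$. Some care is needed because $\delta_i$ may vary and the strip $(\delta_i, \varepsilon)$ shrinks at its lower end; but $\varphi$ can be taken of the form $\min\{\pi_2, \varepsilon\}$ (capped at height $\varepsilon$ and hence in $B_{\Lip_0(M)}$), whose value on any molecule $m_{uv}$ with both heights in $(\delta_i, \varepsilon)$ equals $(\pi_2(u) - \pi_2(v))/d(u,v)$, independent of the truncation — so the estimate is uniform. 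The main obstacle, and the step requiring the most care, is the combinatorial/geometric claim that molecules in $\Mol_{\delta,\varepsilon}(M)$ cannot all be ``nearly horizontal'': one must rule out that $\nu_i$ is built almost entirely from molecules $m_{uv}$ with $\pi_2(u)$ very close to $\pi_2(v)$ (so $\pi_2(m_{uv}) \approx 0$). This should follow from a path/length argument: to move within the strip from a high horizontal line back down requires traversing a leg, and a convex combination of molecules representing an element of norm near $1$ that is also weakly null cannot avoid this — but if the metric genuinely permits nearly-horizontal molecules of arbitrarily small height, one instead uses that weak-null sequences of such molecules would have to ``escape to infinity'' along the lines $y = 1/2^n$, and tests against $\pi_1$ (which is also in $S_{\Lip_0(M)}$) together with $\pi_2$ to trap the mass; working out exactly which finite combination of $\pi_1$-type and $\pi_2$-type functionals forces the contradiction is where the real work lies.
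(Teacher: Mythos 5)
Your first step---invoking Lemma~\ref{lemma1} to replace $\mu_i$, up to error $\varepsilon$, by a convex combination $\nu_i$ of molecules whose endpoints lie in a thin strip $\{\delta_i<\pi_2<\varepsilon\}$---is exactly how the paper's proof begins. The gap is in the second step: no fixed functional built from $\pi_1$ and $\pi_2$ (nor a truncation such as $\min\{\pi_2,\varepsilon\}$) can detect these sequences, so the part you flag as ``where the real work lies'' cannot be completed in the form you propose. Concretely, let $u_n=(0,2^{-n})$, $v_n=(\tfrac12,2^{-n})$, $w_n=(1,2^{-n})$ and $\mu_n=\tfrac12\bigl(m_{u_nv_n}+m_{w_nv_n}\bigr)=\delta_{u_n}+\delta_{w_n}-2\delta_{v_n}$. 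Then $\|\mu_n\|=1$, and since $m_{u_nw_n}+\mu_n=m_{u_nv_n}$ and $m_{u_nw_n}-\mu_n=m_{v_nw_n}$, one gets $\|m_{xy}\pm\mu_n\|\le 1+2^{1-n}\to 1$. Yet $\mu_n$ consists entirely of horizontal molecules, so $\pi_2(\mu_n)=0$ (as does every truncation of $\pi_2$, for large $n$) and also $\pi_1(\mu_n)=0+1-2\cdot\tfrac12=0$. So your claim that ``a large fraction of the mass of $\nu_i$ must lie on molecules with genuine vertical content'' is false, and the witnessing functional genuinely must depend on the sequence: for this $\mu_n$ one needs something like $f(p)=d(p,A)$ with $A=\{v_n\colon n\in\mathbb{N}\}\cup\{0\}$, which gives $f(\mu_n)\to\tfrac12$.

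The idea missing from your outline is how the paper manufactures such a sequence-dependent functional in general. It passes to a subsequence $(\mu_{k_i})$ and chooses the parameters so that the strips are pairwise disjoint, i.e.\ $\nu_i\in\conv\bigl(\Mol_{2\delta_{i+1},\delta_i}(M)\bigr)$ with $2\delta_{i+1}<\delta_i$. Lemma~\ref{lemma_something} (not merely Lemma~\ref{lemma1}) is then the key tool: it upgrades a norming functional for $\nu_i$ to one, $f_i$, that still nearly norms $\nu_i$ but \emph{vanishes on the whole region} $\{\pi_2\le 2\delta_{i+1}\}$. Because any segment joining points of different strips passes through one of the gate points $(0,2\delta_{i+1})$ or $(1,2\delta_{i+1})$, where all the $f_i$ vanish, the functions $f_i$ can be glued into a single Lipschitz function $f$ (with constant at most $3$) agreeing with $f_i$ on the $i$-th strip; this single $f$ satisfies $\liminf_i f(\mu_{k_i})>0$. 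Without this disjoint-strip gluing (or some substitute for it), your argument does not close.
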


\begin{proof}
Let $(\mu_i)$ be a sequence in $B_{\mathcal{F}(M)}$ such that 
$\|m_{xy}\pm \mu_i\|\rightarrow 1$ and $\|\mu_i\|\rightarrow 1$. To prove that $\mathcal{F}(M)$ is not WASQ it suffices to show that $(\mu_i)$ is not weakly null, i.e., it suffices to show there exist a subsequence  $(\mu_{k_i})$ of $(\mu_i)$ and $f\in\Lip_0(M)$ such that $(f(\mu_{k_i}))$ does not converge to 0. 
We will construct sequences $(k_i)$, $(\nu_{i})$, and $(\delta_i)$ inductively.  Let $\delta_1=1/2$ and let $k_1\in \mathbb{N}$ be such that \[\|m_{xy}\pm \mu_{k_1}\|<1+\delta_1^2/64\quad\text{ and }\quad \|\mu_{k_1}\|> 1-\delta_1^2/64.\]
By Lemma \ref{lemma1} there exist $\delta_2>0$ and $\nu_1\in\conv \big(\Mol_{2\delta_2,\delta_1}(M)\big)$ such that \\
\[\|\mu_{k_1}-\nu_1\|<\delta_1.\] 

Assume that we have found $k_1,\ldots,k_{i-1}$, $\nu_1,\ldots,\nu_{i-1}$, and $\delta_1,\ldots,\delta_{i}$ such that 
\[\|\mu_{k_j}-\nu_{j}\|<\delta_j,\quad\quad
    \|m_{xy}\pm \mu_{k_j}\|<1+\delta_j^2/64,\quad\quad
    \|\mu_{k_j}\|> 1-\delta_j^2/64,\]
and $\nu_j\in\conv \big(\Mol_{2\delta_{j+1},\delta_{j}}(M)\big)$ for all $j\in \{1,\ldots,i-1\}$.
Let $k_i\in \mathbb{N}$ be such that \[\|m_{xy}\pm \mu_{k_i}\|<1+\delta_i^2/64\quad\text{ and }\quad \|\mu_{k_i}\|> 1-\delta_i^2/64.\]
By Lemma \ref{lemma1} there exists $\delta_{i+1}>0$ and $\nu_i\in\conv \big(\Mol_{2\delta_{i+1},\delta_{i}}(M)\big)$ such that $\|\mu_{k_i}-\nu_i\|<\delta_i$.

Notice that $\delta_i>2\delta_{i+1}$ for every $i\in\mathbb{N}$. Let $g_{i}\in S_{\Lip_0(M)}$ be such that $g_{i}(\nu_{i})=\|\nu_{i}\|$. Then
\[g_{i}(\mu_{k_i})\ge g_{i}(\nu_{i})-\|\mu_{k_i}-\nu_{i}\|>\|\nu_{i}\|-\delta_i\ge \|\mu_{k_i}\|- \|\mu_{k_i}-\nu_{i}\|-\delta_i>1-3\delta_i\]
and thus
\[\pm g_{i}(m_{xy})\le \|m_{xy}\pm \mu_{k_i}\|-g_{i}(\mu_{k_i})<1+\delta_i-1+3\delta_i=4\delta_i.\]
By Lemma \ref{lemma_something} there exists $f_i\in S_{\Lip_0(M)}$ such that \[f_i(\nu_i)>(\|\nu_i\|-4\delta_i-4\delta_{i+1})/(1+4\delta_i+4\delta_{i+1})\] and
$f_i(p)=0$ for every $p\in M$ with $\pi_2(p)\le2\delta_{i+1}$.

Let $f\colon \bigcup_{i\in\mathbb{N}}\big\{u\in M\colon 2\delta_{i+1}\le \pi_2(u)<\delta_i\big\}\rightarrow \mathbb{R}$ be defined by 
\[f(p)=f_{i}(p)\]
for all $i\in \mathbb{N}$ and $p\in \big\{u\in M\colon 2\delta_{i+1}\le \pi_2(u)<\delta_i\big\}$. Let $u_i=(0,2\delta_{i+1})$ and $v_i=(1,2\delta_{i+1})$.
Then $f_{i}(u_i)=f_i(v_i)=0$ for every $i\in\mathbb{N}$.

Let us show that $f$ is a Lipschitz function. Fix $i,j\in\mathbb{N}$ with $i< j$ and $p\in \big\{u\in M\colon 2\delta_{i+1}\le \pi_2(u)<\delta_i\big\}$ and $q\in \big\{u\in M\colon 2\delta_{j+1}\le \pi_2(u)<\delta_j\big\}$. Then $\delta_j\le\delta_{i+1}$ and therefore
\[d(u_i,u_j)=d(v_i,v_j)=2\delta_{i+1}-2\delta_{j+1}<2\delta_{i+1}\le 4\delta_{i+1}-2\delta_{j}< 2\pi_2(p)-2\pi_2(q)\le 2d(p,q).\]
We have either $u_i\in [p,q]$ or $v_i\in [p,q]$. Assume that $u_i\in [p,q]$ (case $v_i\in [p,q]$ is analogous). Then
\[d(p,u_i)+d(q,u_j)\le d(p,u_i)+d(q,u_i)+d(u_i,u_j)=d(p,q)+d(u_i,u_j)\le 3d(p,q)\]
and thus
\begin{align*}
    |f(p)-f(q)|&\le|f(p)|+|f(q)|\\
    &= |f_{i}(p)-f_{i}(u_i)|+|f_{j}(q)-f_{j}(u_j)|\\
    &\le d(p,u_i)+d(q,u_j)\\
    &\le 3d(p,q).
\end{align*}
Therefore  $f$ is a Lipschitz function.
Now extend $f$ to the whole of $M$ using McShane's extention Theorem. Then 
\[f(\mu_{k_i})\ge f(\nu_{i})-\|\mu_{k_i}-\nu_{i}\|=f_{i}(\nu_{i})-\|\mu_{k_i}-\nu_{i}\|>\frac{\|\nu_i\|-4\delta_i-4\delta_{i+1}}{1+4\delta_i+4\delta_{i+1}}-\delta_i.\]
Since $\|\nu_i\|\rightarrow1$ and $\delta_i\rightarrow0$, we get that $\big(f(\mu_{k_i})\big)$ does not converge to zero. Therefore $\mathcal{F}(M)$ is not WASQ. 

\end{proof}

%\begin{prob}
%(Does there exist/Is there) a property of Banach spaces which characterizes WASQ and LASQ coinciding?
%\end{prob}

\section{Lipschitz-free space over a geodesic metric space}

We now know that LASQ and WASQ are different properties in Lipschitz-free spaces, and thus start looking for a metric characterization of WASQ. In our example from previous section, we strongly relied on the fact that the underlying metric space was not geodesic. Furthermore, the space $\mathcal{F}([0,1])=L_1[0,1]$ is known to be WASQ, so it is natural to consider whether  underlying metric space being geodesic could characterize weakly almost square Lipschitz-free spaces. We provide a partial result that suggests it might be the case, however the full characterization remains as an open problem. 

As a first step, we notice that when $M$ is a geodesic metric space, then we can choose $\varepsilon=0$ in the proof of {\cite[Lemma~2.1]{HKO}} to get the following lemma.

\begin{lem}[c.f. {\cite[Lemma~2.1]{HKO}}]\label{Mol_esitus}
Let $M$ be a geodesic metric space and let $y\in S_{\mathcal{F}(M)}$ with finite support. Then there exist $n\in\mathbb{N}$, $\lambda_i>0$, $u_i,v_i\in M$ with $u_i\neq v_i$, and geodesics $\gamma_{u_iv_i}$ for all $i\in\{1,\ldots,n\}$ such that 
\[y = \sum_{i=1}^n \lambda_i m_{u_i v_i},\quad \sum_{i=1}^n \lambda_i =1,\quad \Lip \gamma_{u_iv_i}=L(\gamma_{u_iv_i})=d(u_i,v_i)\]
and 
\[\image \gamma_{u_iv_i}\cap\image \gamma_{u_jv_j}\subseteq\{u_i,v_i\}\cap\{u_j,v_j\},\]
for all $i,j\in \{1,\ldots,n\}$ with $i\neq j$.
\end{lem}

Now we are ready to present the main result of this section.
\begin{prop}
Let $M$ be a geodesic metric space. Then for every $y\in S_{\mathcal{F}(M)}$ with finite support, there exists a sequence $(\mu_i)\subseteq B_{\mathcal{F}(M)}$ such that $\|y\pm \mu_i\|\rightarrow 1$, $\| \mu_i\|\rightarrow 1$, and $\mu_i\rightarrow0$ weakly.
\end{prop}

\begin{proof}
Fix $y\in S_{\mathcal{F}(M)}$. By Lemma \ref{Mol_esitus}, there exist  $n\in\mathbb{N}$, $\lambda_i>0$, $u_i,v_i\in M$ with $u_i\neq v_i$, and geodesics $\gamma_{u_iv_i}$ for all $i\in\{1,\ldots,n\}$ such that $y = \sum_{i=1}^n \lambda_i m_{u_i v_i}$, $\sum_{i=1}^n \lambda_i =1$, $\Lip \gamma_{u_iv_i}=L(\gamma_{u_iv_i})=d(u_i,v_i)$
and 
\[\image \gamma_{u_iv_i}\cap\image \gamma_{u_jv_j}\subseteq\{u_i,v_i\}\cap\{u_j,v_j\}\]
for all $i,j\in \{1,\ldots,n\}$ with $i\neq j$. To make $M$ into a pointed metric space, choose $\gamma_{u_1,v_1}(1/2)$ as the fixed point $0$.

Define
\[p_{j}^{ik}=
\gamma_{u_iv_i}(j2^{-k})\]
as consecutive points on the path $\gamma_{u_iv_i}$ for all $i\in\{1,\ldots,n\}$, $k\in \mathbb{{N}}$, and $j\in\{0,1,\ldots,2^{k}\}$. We are now ready to define the sequence $\mu_k$ within the support of these points. 

For all $i\in\{1,\ldots,n\}$ and $k\in \mathbb{N}$, set
\[\nu_{k}^i=\sum_{j=1}^{2^{k}}(-1)^{j}\frac{d(p_{j-1}^{ik},p_{j}^{ik})}{d(u_i,v_i)}m_{p_{j-1}^{ik}p_{j}^{ik}}\quad \text{and}\quad \mu_k=\sum_{i=1}^n \lambda_i\nu_{k}^i.\]
We will show that $\|y\pm \mu_k\|\rightarrow1$, $\| \mu_k\|\rightarrow1$ and $ \mu_k\rightarrow0$ weakly. 

For every $i\in \{1,\ldots,n\}$ and $k\in\mathbb{N}$ we have
\[m_{u_iv_i}=\sum_{j=1}^{2^{k}}\frac{d(p_{j-1}^{ik},p_j^{ik})}{d(u_i,v_i)} m_{p_{j-1}^{ik}p_{j}^{ik}}\]
and thus
\begin{equation*}
\|m_{u_iv_i}-\nu_{k}^i\|=2\Big\|\sum_{j=1}^{2^{k-1}}\frac{d(p_{2j-2}^{ik},p_{2j-1}^{ik})}{d(u_i,v_i)} m_{p_{2j-2}^{ik}p_{2j-1}^{ik}}\Big\|\le 2\sum_{j=1}^{2^{k-1}}\frac{d(p_{2j-2}^{ik},p_{2j-1}^{ik})}{d(u_i,v_i)}=1,
\end{equation*}
which gives us $\|y- \mu_k\|\le1$. The proof for $\|y+ \mu_k\|\le1$ is analogous.

%%%%%%%%% -2 tõestus
Next we will show that $\|\mu_k\|\rightarrow1$. It is easy to verify, that for all $k\in \mathbb{N}$ and $i\in \{1,\ldots,n\}$ we have $\|\nu_k^i\|\leq 1$, and thus $\|\mu_k\|\leq 1$ . Therefore it suffices to show that for every $\varepsilon>0$ there exists $l\in\mathbb{N}$ such that $\|\mu_k\|\ge1-8\varepsilon$ for all $k\ge l$ . 

Fix $\varepsilon\in (0,1/8)$ and let 
\[B_i=B\big(u_i,\varepsilon d(u_i,v_i)\big)\cup B\big(v_i,\varepsilon d(u_i,v_i)\big)\]
for every $i\in\{1,\ldots,n\}$.
Note that the sets $\operatorname{Im} \gamma_{u_1,v_1}{\setminus} B_1, \dotsc, \operatorname{Im} \gamma_{u_n,v_n}{\setminus} B_n$ are pairwise disjoint and compact, therefore there exists $l\in \mathbb{N}$ with $2^{-l}<\varepsilon$ such that
\[
\min_{i\neq j\in\{1,\ldots,n\}} d(\operatorname{Im} \gamma_{u_i,v_i}{\setminus} B_i , \operatorname{Im} \gamma_{u_j,v_j}{\setminus} B_j) >\max_{i\in\{1,\ldots,n\}} \frac{d(u_i,v_i)}{2^{l}}.
\]
Fix $k\ge l$ and let $k'\in\mathbb{N}$ be such that 
\[\varepsilon2^{k}\le k'\le \varepsilon2^k+1.\] 
Let 
\[N=\big\{p_{j}^{ik}\colon i\in \{1,\ldots,n\}, j\in \{k',k'+1,\ldots,2^{k}-k'\}\big\}\]
and define $f:N\rightarrow \mathbb{R}$ as
 \[f(p_{j}^{ik})= \begin{cases}
     d(p_{j-1}^{ik},p_{j}^{ik}),&\text{if }j\text{ is odd},\\ 
    0, &\text{ otherwise}. 
 \end{cases}\]
Note that $p_{2^{k-1}}^{1k}=\gamma_{u_1,v_1}(1/2)$ is the fixed point $0$ and thus $f(0)=0$. We will first show that $f\in S_{\Lip_0(N)}$. Considering  $|f(p_{k'}^{1k})-f(p_{k'+1}^{1k})|= d(p_{k'}^{1k},p_{k'+1}^{1k})$, it suffices to show, that $\Lip (f)\leq 1$.
Fix $i,i'\in \{1,\ldots,n\}$ and $j,j'\in\{k',k'+1,\ldots,2^{k}-k'\}$ such that $i\neq i'$ or $j\neq j'$. If $i=i'$, then 
\[\big|f(p_{j}^{ik})-f(p_{j'}^{i'k})\big|\le d(p_{j-1}^{ik},p_{j}^{ik})\le d(p_{j'}^{ik},p_{j}^{ik})=d(p_{j}^{ik},p_{j'}^{i'k}),\]
and if $i\neq i'$, then 
\begin{align*}
    \big|f(p_{j}^{ik})-f(p_{j'}^{i'k})\big|&\le \max\big\{d(p_{j-1}^{ik},p_{j}^{ik}),d(p_{j'-1}^{i'k},p_{j'}^{i'k})\big\}=\max\Big\{ \frac{d(u_i,v_i)}{2^{k}}, \frac{d(u_{i'},v_{i'})}{2^{k}}\Big\}\\
    &\le\max\Big\{ \frac{d(u_i,v_i)}{2^{l}}, \frac{d(u_{i'},v_{i'})}{2^{l}}\Big\}\le d(p_{j}^{ik},p_{j'}^{i'k}),
\end{align*}
since $p_j^{ik}\notin B_i$ and $p_{j'}^{i'k}\notin B_{i'}$. 
Hence $f\in S_{\Lip_0(N)}$, and we can extend $f$ to the whole of $M$ using McShane's extension Theorem.
Then for every $i\in \{1,\ldots,n\}$ we have
\begin{align*}
f(\nu^i_k)&=\sum_{j=1}^{2^{k}}(-1)^{j}\frac{f(p_{j-1}^{ik})-f(p_{j}^{ik})}{d(u_i,v_i)}\\
&\ge\sum_{j=k'+1}^{2^{k}-k'}\frac{d(p_{j-1}^{ik},p_{j}^{ik})}{d(u_i,v_i)}-\sum_{j=1}^{k'}\frac{d(p_{j-1}^{ik},p_{j}^{ik})}{d(u_i,v_i)}-\sum_{j=2^{k}-k'+1}^{2^{k}}\frac{d(p_{j-1}^{ik},p_{j}^{ik})}{d(u_i,v_i)}\\
&=\frac{2^{k}-4k'}{2^{k}}\ge1-4\varepsilon-\frac{4}{2^{k}}>1-8\varepsilon,
\end{align*}
and therefore
\[\|\mu_k\|\ge f(\mu_k)=\sum_{i=1}^n\lambda_if(\nu_k^i)\ge1-8\varepsilon.\]

Lastly we will prove that $\nu_{k}^i\rightarrow0$ weakly for every $i\in\{1,\ldots,n\}$, and thus also $\mu_k\rightarrow0$ weakly. Fix $i\in\{1,\ldots,n\}$ and $f\in \Lip_0(M)$, and define $g\colon [0,1]\rightarrow\mathbb{R}$ as $g=f\gamma_{u_iv_i}$. Then $g$ is Lipschitz and differentiable almost everywhere. Furthermore
\begin{align*}
f(\nu_{k}^i)&=\sum_{j=1}^{2^{k}}(-1)^{j}\frac{d(p_{j-1}^{ik},p_j^{ik})}{d(u_i,v_i)} \frac{f(p_{j-1}^{ik})-f(p_{j}^{ik})}{d(p_{j-1}^{ik},p_{j}^{ik})}\\
&=\frac{1}{d(u_i,v_i)}\sum_{j=1}^{2^{k}}(-1)^{j}\big(g\big((j-1)2^{-k}\big)-g(j2^{-k})\big)\\
&=\frac{1}{d(u_i,v_i)}\sum_{j=1}^{2^{k}}\int\displaylimits_{(j-1)2^{-k}}^{j2^{-k}}(-1)^{j}g'(t)\mathrm{d}t\\
&=\frac{1}{d(u_i,v_i)}\int_0^1 r_k(t)g'(t)\mathrm{d}t,
\end{align*}
where $r_k\colon [0,1]\rightarrow\mathbb{R}$ are the Rademacher functions defined by
\[r_k(t)=\begin{cases}1,& t\in[(j-1)2^{-k},j2^{-k}],\, j\in \{2,4,\ldots,2^k\}\\ -1, & \text{otherwise}.\end{cases}\]
Recall that Rademacher functions are weakly null in $L_1[0,1]$ and $g'\in L_\infty[0,1]$, hence we have $f(\nu_k^i)\rightarrow 0$. This completes the proof.
\end{proof}

\begin{prob}
Is it true that a Lipschitz-free space $\mathcal{F}(M)$ is weakly almost square if and only if the metric space $M$ is geodesic?
\end{prob}

\section{Lipschitz-free space can never have the SSD2P}

According to \cite{MR4183387}, a Banach space $X$ has the \emph{symmetric strong diameter $2d$ property} (briefly, \emph{SSD($2d$)P}) for $d\in (0, 1]$ 
if, for every $\varepsilon>0$ and any number of slices $S_1,\ldots,S_n$ of the unit ball there exist $x_1\in S_1,\dotsc,x_n\in S_n$ and $y\in B_X$ with $\|y\| \geq 1-\varepsilon$ such that for every $i\in\{1,\dotsc,n\}$ we have
\[ x_i\pm dy\in S_i.\]
Note that SSD($2d$)P with $d=1$ means precisely SSD$2$P. By \cite[Propositions~1.6 and 1.7]{MR4183387}, every slice of the unit ball of a Banach space with SSD($2d$)P has at least diameter $2d$, and therefore the unit ball of a Banach space with SSD($2d$)P cannot contain strongly exposed points. Thus, by \cite[Theorem 1.5]{AM}, a complete metric space $M$ has to be length whenever the Lipschitz-free space $\mathcal{F}(M)$ has the SSD($2d$)P.

Let us also point out that if $A$ is a dense set in $B_X$, and $X$ has the SSD($2d$)P, then we can always choose the element $y$ from the definition in the set $A$. Indeed, if $X$ has the SSD($2d$)P, then for every $\varepsilon>0$ and any number of slices $S_1,\ldots,S_n$ of the unit ball there exist $x_1\in S_1,\dotsc,x_n\in S_n$ and $y\in B_X$ with $\|y\| \geq 1-\varepsilon/2$ such that for every $i\in\{1,\dotsc,n\}$ we have
$ x_i\pm dy\in S_i.$ 
For a small enough $\delta>0$ we have $(1-\delta)(x_i\pm dy)\in S_i$ for every $i\in\{1,\dotsc,n\}$.
Hence we may assume $\|y\|<1$ and $\|x_i\pm dy\|<1$ for all $i\in\{1,\ldots,n\}$.
%, by choosing $(1-\delta)x_i$ instead of $x_i$ for small $\delta>0$, if necessary.
Now we can find a sequence $(y_i)$ in $A$ such that $y_i\rightarrow y$. Thus there exists $j\in\N$ such that $\|y_j\|\ge 1-\varepsilon$ and 
$x_i\pm dy_j\in S_i$ for every $i\in\{1,\dotsc,n\}$.

Our main result in this section is the following.

\begin{thm}\label{thm: F(M) never SSD2P}
A Lipschitz-free space cannot have the SSD$2$P. In fact, a Lipschitz-free space cannot have the SSD($2d$)P for any $0<d\leq 1$.
\end{thm}

\begin{proof}
Fix $0<d\leq 1$. Let $\varepsilon\in(0,d/6)$ and let $n\in \mathbb{N}$ be such that $4/n\leq \varepsilon$. We assume by contradiction, that $\mathcal{F}(M)$ has SSD($2d$)P. Thus we can assume, that $M$ is length and $\mathcal{F}(M)$ is infinite dimensional. Therefore $M$ is infinite and we can choose pairwise distinct points $p_1,\ldots,p_n\in M\setminus\{0\}$. Let
\[R=\frac{1}{4}\min\big\{d(u,v)\colon u,v\in \{p_1,\ldots,p_n,0\}, u\neq v\big\}\]
and let $r=\varepsilon R/2$.
For every $i\in\{1,\dotsc,n\}$, set $B_i = B(p_i, R)$ and $C_i=B(p_i,r)$. Then $B_1,\dotsc,B_n$ are pairwise disjoint. For each $i\in \{1,\ldots,n\}$, we define functions $f_i\in S_{\Lip_0(M)}$ as
\[f_i(u)=\begin{cases}r-d(u,p_i),& u\in C_i,\\ 0, & u\in M\setminus C_i,\end{cases}\]
and slices $S_i\subset B_{\mathcal{F}(M)}$ as $S_i=S(f_i,\varepsilon)$. Since $M$ is length, then $p_i$ is a cluster point and thus $\|f_i\|=1$ for every $i\in\{1,\ldots,n\}$. Therefore our slices are defined correctly. Let $\mu\in B_{\mathcal{F}(M)}$ be such that $\|\mu\|\ge 1-\varepsilon$. Our goal is to show that for some $i\in \{1,\ldots,n\}$ we have $\nu+d\mu\notin S_i$ for every $\nu\in S_i$, from which it immediately follows that $\mathcal{F}(M)$ does not have the SSD($2d$)P. Since the set of finite convex combinations of molecules is dense in $\mathcal{F}(M)$, it suffices to consider the case $\mu = \sum_{j=1}^m \frac{1}{m}\, m_{u_j v_j}$. By a cardinality argument, we fix an $i\in \{1,\dotsc, n\}$ such that the set
\[
J = \{j\in \{1,\dotsc,m\}\colon u_j\in B_i \text{ or } v_j\in B_i\}
\]
has at most $2m/n$ elements. 

Fix $\nu\in S_i$. We are going to show that $\|\nu+d\mu\|>1$, which implies $\nu+d\mu\not\in S_i$. We can find $\nu_0=\sum_{j=1}^k\frac1k m_{x_jy_j}\in S_i$ such that $\|\nu-\nu_0\|<2\varepsilon$ and $x_j,y_j\in C_i$ for all $j\in \{1,\ldots,k\}$. %Küsimus: kas võib kirjutada: We can find $z\in S_i$ with finite support such that $\|z-x\|<2\varepsilon$ and $\supp(z)\in C_i$?? See ei ole samaväärne, kuna $0$ ei kuulu supporti aga võib kuuluda kombinatsiooni. 

Next, we will define a specific $g\in \Lip_0(M)$. Let $f\in B_{\Lip_0(M)}$ be such that $f(\mu)=(1-\varepsilon)\|\mu\|$ and $\|f\|=1-\varepsilon$. We set $g|_{M{\setminus} B_i}=f|_{M{\setminus} B_i}$, $g|_{C_i}=f_i|_{C_i}+f(p_i)-f_i(p_i)$, and by McShane's extension Theorem, extend $g$ to be defined in the whole of $M$ while preserving the Lipschitz constant.
Then $\|g\|\leq 1$ because, for every $a\in M\setminus B_i$ and $b\in C_i$, one has
\begin{align*}
    |g(a)-g(b)|&=|f(a)-f_i(b)-f(p_i)+f_i(p_i)|\leq (1-\varepsilon)d(a,p_i)+d(b,p_i)\\
    &\leq d(a,b)+2d(b,p_i)-\varepsilon d(a,p_i)\leq d(a,b)+2r-\varepsilon R= d(a,b).
\end{align*}
Note that
\[(f-g)(\mu) = (f-g)\bigl(\sum_{j\in J}\frac{1}{m}\, m_{u_jv_j}\bigr) \leq (\|f\|+\|g\|) \sum_{j\in J}\frac{1}{m}\|m_{u_j v_j}\|\leq 2\frac{2m}{mn} = \frac{4}{n}.\]
Therefore
\begin{align*}
    dg(\mu) &= df(\mu)-d(f-g)(\mu)\geq d(1-\varepsilon)\|\mu\|-\frac{4d}{n}\\
    &\geq d\|\mu\|-\varepsilon  -\frac{4}{n}\geq d(1-\varepsilon)-2\varepsilon\geq d-3\varepsilon,
\end{align*}
hence
\begin{align*}
    \|\nu_0+d\mu\|
    \geq g(\nu_0)+dg(\mu)\geq  1-\varepsilon+d-3\varepsilon= 1+d-4\varepsilon
\end{align*}
and 
\[\|\nu+d\mu\|\geq \|\nu_0+d\mu\|-\|\nu-\nu_0\|\geq 1+ d -6\varepsilon>1.\]
\end{proof}

\begin{rem}
Recall that a Banach space $X$ has the SSD$2$P if and only if $X^{**}$ has the weak$^*$ SSD$2$P (see \cite[Section~5]{HLLN}). Using a similar argument, one can prove that for SSD$(2d)$P as well. Thus it immedietly follows from Theorem \ref{thm: F(M) never SSD2P} that $\Lip_0(M)^*$ can not have the weak$^*$ SSD($2d$)P for any $0<d\leq 1$.
\end{rem}

\section*{Acknowledgements}
The authors are grateful to Rainis Haller for several helpful comments concerning the paper. The authors also wish to express their gratitude to the referee for pointing out a mistake in Section 3 and for several helpful comments regarding other sections.

This work was supported by the Estonian Research
Council grant (PRG1901).

%\addcontentsline{toc}{section}{References}
%\bibliography{bibliography}{}
%\bibliographystyle{amsplain}

\bibliographystyle{amsplain}
\footnotesize

\end{document}